\newtheorem{thma}{Theorem}
\theoremstyle{remark}
\newtheorem{remark}[thma]{Remark}
\newtheorem{ex}[thma]{Example}
\theoremstyle{plain}
\newtheorem{lemma}[thma]{Lemma}
\newcommand{\supp}{\operatorname{supp}}
\newcommand{\diag}{\mathop{\mathrm{diag}}}
\newcommand{\diam}{\mathop{\mathrm{diam}}}
\newcommand{\trace}{\operatorname{trace}}
\newcommand{\ring}{\ensuremath{\mathcal{R}}}
\newcommand{\field}{\ensuremath{\mathbb{F}}}
\newcommand{\vecspace}[1]{\ensuremath{\mathcal{#1}}}
\newcommand{\mat}[2]{\ensuremath{{#1}^{#2\times #2}}}
\newcommand{\free}{\mathbf{F}}
\title[Fixed point theorems for nc functions]{Fixed point theorems
   for noncommutative functions}
\author[G. Abduvalieva]{Gulnara Abduvalieva}
\address{Department of Mathematics \\
Drexel University\\
3141 Chestnut Str.\\
  Philadelphia, PA, 19104}
\email{gka26@drexel.edu}
\author[D.S.~Kaliuzhnyi-Verbovetskyi]{Dmitry S.
Kaliuzhnyi-Verbovetskyi}
\address{Department of Mathematics \\
Drexel University\\
3141 Chestnut Str.\\
  Philadelphia, PA, 19104}
\email{dmitryk@math.drexel.edu}
\date{}
\thanks{The second author was partially supported by the NSF grant DMS 0901628 and by the US--Israel BSF grant 2010432.}
\begin{document}

\maketitle

\begin{abstract}
We establish a fixed point theorem for mappings of square matrices
of all sizes which respect the matrix sizes and direct sums of
matrices. The conclusions are stronger if such a mapping also
respects matrix similarities, i.e., is a noncommutative function.
As a special case, we prove the corresponding contractive mapping
theorem which can be viewed as a new version of the Banach Fixed
Point Theorem. This result is then applied to prove the existence
and uniqueness of a solution of the initial value problem for ODEs
in noncommutative spaces. As a by-product of the ideas developed
in this paper, we establish a noncommutative version of the
principle of nested closed sets.
\end{abstract}

\section{Introduction}\label{sec:intro}
The theory of noncommutative (nc) functions has its origin in the
articles of Joseph L. Taylor \cite{JLTaylor,JLTaylor-nc}. It was
further developed by D.-V. Voiculescu \cite{Voic1,Voic2} in his
fundamental work in free probability. Based on pioneering ideas of
J. L. Taylor, the second author and Victor Vinnikov \cite{KV-VV}
developed the nc difference-differential calculus and used it for
studying various questions of nc analysis, in particular,
extending the classical (commutative) theory of analytic functions
to a nc setting. A special case of nc rational functions, which is
important for applications in optimization and control, where
matrices are natural variables and the problems are
dimension-independent (see \cite{Helton,HMcCV,HMcCPV} for a
detailed discussion), can be studied independently --- see
\cite{KVV1,KVV2}. The theory of nc rational functions is motivated
by and useful in nc semialgebraic geometry --- see, e.g.,
\cite{Helton0,HMcC,HP,HKMcC}. We also mention the works of
Helton--Klep--McCullough \cite{HKMcC1,HKMcC2}, of Popescu
\cite{Pop1,Pop2,Pop3}, and of Muhly--Solel \cite{MS} on various
aspects of nc function theory. The goal of the present paper is to
establish a certain type of fixed point theorems as a useful tool
in nc analysis.

We provide the reader with some basic definitions from
\cite{KV-VV}.
 Let $ \mathcal{R} $ be a unital ring. For a bi-module $\mathcal{M}$ over
$\mathcal{R},$ we
 define the \emph{nc space over  $ \mathcal{M} $},
\begin{equation}\label{eq:ncspace}
 \mathcal{M}_{\rm nc}:=\coprod_{n=1}^{\infty}  \mathcal{M}^{n \times n}.
\end{equation}

 A subset $ \Omega  \subseteq  \mathcal{M}_{\rm nc} $ is called a \emph{nc set} if it is closed under direct sums;
that is, denoting $\Omega_{n}=\Omega \cap \mathcal{M}^{n\times n}$, we have
\begin{equation}\label{eq:ncset}
   X \in \Omega_{n},   Y \in \Omega_{m}   \Longrightarrow X \oplus Y := \left [ \begin{array}{cc}
X& \mathbf{O}_{n\times m}
\\ \mathbf{O}_{m\times n} &Y
  \end{array} \right ] \in\Omega_{n+m},
\end{equation}
 where $\mathbf{O}_{p\times q}$ denotes the $p\times q$ matrix whose all entries are 0.

Notice that  matrices over $ \mathcal{R}  $  act from the right and from the left on matrices over $\mathcal{M}$
by the standard rules of matrix multiplication: if $ T \in \mathcal{R}^{r \times p} $ and  $ S \in
\mathcal{R}^{p\times s} $, then for $ X \in \mathcal{M}^{p \times p} $ we have
 \[TX \in  \mathcal{M}^{r \times p}, \quad       XS \in  \mathcal{M}^{p \times s}. \]

 In the special case where
$\mathcal{M}=\mathcal{R}^{d}$, we identify matrices over
$\mathcal{M}$  with  $ d $-tuples of matrices over $\mathcal{R}$
:\[ (\mathcal{R}^{d})^{p\times q}\cong( \mathcal{R}^{p\times q})^{d}
.\] Under this identification, for $d$-tuples  $X=(X_{1}, \ldots, X_{d}) \in
(\mathcal{R}^{n\times n})^{d} $ and $Y=(Y_{1}, \ldots, Y_{d}) \in
(\mathcal{R}^{m\times m})^{d} $ their direct sum has the form
\[  X \oplus Y = \left ( \left [ \begin{array}{cc} X_{1}& \mathbf{O}_{n\times m}\\ \mathbf{O}_{m\times n} &Y_{1}
 \end{array} \right ], \ldots,
\left [ \begin{array}{cc} X_{d}& \mathbf{O}_{n\times m}\\ \mathbf{O}_{m\times n} &Y_{d} \end{array} \right ]
\right)
 \in (\mathcal{R}^{(n+m)\times(n+m)})^{d}; \] and for a $d$-tuple $X=(X_{1}, \ldots, X_{d}) \in
(\mathcal{R}^{p\times p})^{d} $ and matrices $T \in \mathcal{R}^{r\times p}$,
   $S \in \mathcal{R}^{p\times s}$,
\[TY=(TY_{1}, \ldots, TY_{d}) \in (\mathcal{R}^{r\times p})^{d},\qquad XS=(X_{1}S, \ldots, X_{d}S)
 \in (\mathcal{R}^{p\times s})^{d};  \] that is, $T$ and $S$ act on $d$-tuples of matrices componentwise.

 Let $\mathcal {M}$ and $\mathcal{N}$ be bi-modules over
$ \mathcal{R}$, and let $ \Omega \subseteq \mathcal{M}_{\rm nc}$ be a nc set. A mapping \[  f: \Omega \to
\mathcal{N}_{\rm nc}\] with the property that $f(\Omega_{n}) \subseteq \mathcal{N}^{n\times n}$, $n=1,2,\ldots$,
is called  a \emph{nc function} if $f$ satisfies  the following two conditions:
\begin{equation}\label{eq:dirsums}   f \ \emph{respects\ direct\ sums:\ }
f(X\oplus Y) = f(X) \oplus f(Y),\quad X, Y \in \Omega; \end{equation}
\begin{multline}
\label{eq:sim}  f \ \emph{respects\ similarities:}\ {\rm if\ } X
\in \Omega_{n}\ {\rm and}\  S \in \mathcal{R}^{n\times n}\ {\rm
is}\ {\rm invertible\ }  \\{\rm with\ }  SXS^{-1} \in \Omega_{n},\
{\rm then}\
  f(SXS^{-1}) = Sf(X)S^{-1}, \end{multline}
or, equivalently, satisfies the single condition:
\begin{multline}\label{eq:intertw}  f\ respects\ intertwinings:\
{\rm if}\ X \in \Omega_{n}, Y \in \Omega_{m},\ {\rm and}\ T \in \mathcal{R}^{n\times m}\\
{\rm are\ such\ that\ } XT = TY,\ {\rm then}\ f(X)T = Tf(Y).
\end{multline}

Condition \eqref{eq:intertw} was used by J. L. Taylor in the case
where $ \mathcal{M} = \mathbb{C}^{d},$ together with an additional
assumption of analyticity of $f(X)$ as a function of matrix
entries $ ( X_{i})_{j, k},\ i = 1, \ldots, d;\ j,  k = 1, \ldots,
n, $ for every $ n\in \mathbb{N}$ --- see \cite{JLTaylor-nc}.

Notice a certain discrepancy in our terminology (inherited from
\cite{KV-VV}): a set is nc if it respects direct sums, while a
function is nc if it respects both direct sums and similarities.
Nevertheless, we prefer to keep it this way, setting the minimal
assumptions under which the theory of nc functions starts
revealing its phenomena.

\begin{ex}\label{ex:1}
Let $$p=\sum_{w\in\free_d\colon |w|\le m}p_wz^w$$ be a nc
polynomial, where $\free_d$ is the free semigroup on $d$
generators (an alphabet) $g_1$, \ldots, $g_d$, with the unit
element $\emptyset$ (the empty word); for a word $w=g_{i_1}\cdots
g_{i_k}\in\free_d$ and a $d$-tuple of noncommuting indeterminates
$z=(z_1,\ldots,z_d)$, we use the notation $z^w:=z_{i_1}\cdots
z_{i_k}$ (and $z^\emptyset:=1$); the length of the word
$w=g_{i_1}\cdots g_{i_k}$ is $|w|:=k$ (and $|\emptyset|:=0$);
$p_w\in\ring$ for every $w\in\free_d$. Then $p$ can be viewed as a
nc function $p\colon\Omega=(\ring^d)_{\rm nc}\to\ring_{\rm nc}$:
for a $d$-tuple $X=(X_1,\ldots,X_d)$ of $n\times n$ matrices over
$\ring$, we set
$$p(X):=\sum_{w\in\free_d\colon |w|\le m}p_wX^w,$$
where $X^w:=X_{i_1}\cdots X_{i_k}$ for $w=g_{i_1}\cdots g_{i_k}\in\free_d$ and $X^\emptyset:=\mathbf{I}_n$ (the
$n\times n$ identity matrix).
\end{ex}
\begin{ex}\label{ex:2}
Let $\ring=\mathbb{C}$, $\mathcal{M}=\mathbb{C}^d$, $\mathcal{N}=\mathbb{C}$. Let
$\Omega\subseteq\mathcal{M}_{\rm nc}$ be the nc set of $d$-tuples $X=(X_1,\ldots,X_d)$ of square matrices of the
same size such that the spectral radius of $X_1+\cdots+X_d$ is strictly less than 1. Then
$f\colon\Omega\to\mathcal{N}_{\rm nc}$ defined by
\begin{multline*}
f(X):=(\mathbf{I}_n-X_1-\cdots-X_d)^{-1}=\sum_{j=0}^\infty(X_1+\cdots+X_d)^j=\sum_{j=0}^\infty\sum_{w\in\free_d\colon
|w|=j}X^w
\end{multline*}
is a nc function.
\end{ex}
In the next two examples, we present some mappings of matrices
which respect matrix sizes, but fail to be nc functions because
one of the two conditions, \eqref{eq:dirsums} or \eqref{eq:sim},
is not satisfied.

\begin{ex}\label{ex:3}
Let $\ring$ be commutative, and let $f\colon\Omega=\ring_{\rm
nc}\to\ring_{\rm nc}$ be defined by
$$f(X):=(\det X)\mathbf{I}_n,\quad X\in\mat{\ring}{n},\ n\in\mathbb{N}.$$
Then, clearly, $f$ respects the matrix size and similarities,
i.e., $f(\Omega_n)\subseteq\mat{\ring}{n}$, $n\in\mathbb{N}$, and
\eqref{eq:sim} holds. However, $f$ does not respect direct sums,
i.e., \eqref{eq:dirsums} does not always hold. Thus, $f$ is not a
nc function. Notice that replacing $\det X$ by $\trace X$, i.e.,
defining
$$f(X):=(\trace X)\mathbf{I}_n,\quad X\in\mat{\ring}{n},\ n\in\mathbb{N},$$
would bring the same conclusions.
\end{ex}
\begin{ex}\label{ex:4}
Let $\ring=\mathcal{M}=\mathcal{N}=\field$, where
$\field=\mathbb{R}$ or $\field=\mathbb{C}$. For every scalar $t$,
define $f_t\colon\mathcal{M}_{\rm nc}\to\mathcal{N}_{\rm nc}$ by
$$f_t(X):=t\widehat{X},$$
where $\widehat{X}$ is a square matrix over $\field$ which has the same size and the same main diagonal as $X$,
and whose all off-diagonal entries are zeros. Then, clearly, $f_t$ respects direct sums, i.e., \eqref{eq:dirsums}
holds, however $f_t$ does not respect similarities, i.e., \eqref{eq:sim} does not always hold, for $t\neq 0$. For
example, let
$$X=\begin{bmatrix}
1 & 1\\
1 & -1
\end{bmatrix},\quad S=\begin{bmatrix}
1/\sqrt{2} & 1/\sqrt{2}\\
-1/\sqrt{2} & 1/\sqrt{2}
\end{bmatrix}.$$
Then $$\widehat{SXS^{-1}}=\begin{bmatrix}
1 & 0\\
0 & -1
\end{bmatrix}\neq\begin{bmatrix}
0 & -1\\
-1 & 0
\end{bmatrix}=S\widehat{X}S^{-1}.$$
Thus, $f_t$ is not a nc function unless $t=0$.
\end{ex}

\section{The results} \label{sec:results}
A part of our main results is valid under much more general
assumptions on the sets of matrices and the corresponding
functions. If $\mathcal{S}$ is a set, then we will write
$\mat{\mathcal{S}}{n}$ for a set of $n\times n$ matrices with
entries from $\mathcal{S}$, where we consider matrices just as
arrays, i.e., not assuming any algebraic structure on them. We
will also extend the notation of \eqref{eq:ncspace} to an
arbitrary set $\mathcal{S}$ in the place of $\mathcal{M}$.

Fix some element $O\in\mathcal{S}$. We define direct sums of matrices from $\mathcal{S}_{\rm nc}$ in the same way
as in \eqref{eq:ncset} except that $\mathbf{O}_{p\times q}$ is understood now as the $p\times q$ matrix whose all
entries are equal to $O$.

\begin{thma} \label{thm:thfixedp-1}
Let $\mathcal{S}$ be a set, $O\in\mathcal{S}$, and let $\Omega
\subseteq \mathcal{S}_{\rm nc}$ respect direct sums of matrices.
Define $ \supp{ \Omega} := \{ n \in \mathbb{N}: \Omega_{n} \neq
\emptyset \}.$ Let $f: \Omega \rightarrow \Omega$ satisfy
                                          \begin{itemize}
\item $f(\Omega_{n}) \subseteq \Omega_{n}$, $ n \in \supp{\Omega};$ \item   $f$ respects  direct  sums:
$f(X\oplus Y) = f(X) \oplus f(Y),\quad X, Y \in \Omega;$ \item For every $n \in \supp{\Omega}$ the mapping $f
|_{\Omega_{n}}$ has a unique fixed point, $X_{*n}$.
\end{itemize}
Let  $d = {\rm gcd}\{ n: n \in \supp{\Omega}\}$. Then
\begin{enumerate}
\item[1.] There exists $X_{*}\in \mathcal{S}^{d\times d}$ such that
\begin{equation}\label{eq:fixedp} X_{* n} = \bigoplus_{\alpha = 1}^{n/d}X_{*}, \quad n\in \supp{\Omega}.\end{equation}
\item[2.]  If, moreover, $\mathcal{S}=\mathcal{M}$ is a bi-module
over $\ring$, $O=0\in\mathcal{M}$, and $f$  is a nc function, then
there exists  a nc set $ \tilde{\Omega} \supseteq \Omega $
 with $\supp{\tilde{\Omega}} = \mathbb{N}d$ and a nc function $\tilde{f}:\tilde{ \Omega} \rightarrow \tilde{ \Omega}$
such that
\begin{itemize}
\item  $\tilde{f} |_{\Omega} = f;$
\item For every $n \in \mathbb{N}d$ the mapping $\tilde{f} |_{\tilde{\Omega}_{n}}$ has a unique fixed point
\[  X_{* n} = \bigoplus_{\alpha = 1}^{n/d}X_{*}.\]
\end{itemize}
\end{enumerate}
\end{thma}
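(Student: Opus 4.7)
The plan is to prove Part 1 by exploiting that direct sums of fixed points are again fixed points and hence must coincide with the unique fixed point on the larger $\Omega$-stratum, and then to construct $\tilde\Omega$ and $\tilde f$ in Part 2 as the direct-sum closure of $\Omega\cup\{X_*\}$. The cornerstone of Part 1 is the identity
\[
X_{*,n+m}=X_{*n}\oplus X_{*m},\qquad n,m\in\supp\Omega,
\]
which holds because $X_{*n}\oplus X_{*m}$ lies in $\Omega_{n+m}$ (as $\Omega$ respects direct sums) and is fixed by $f$ (as $f$ does), so by uniqueness equals $X_{*,n+m}$. Iterating gives the key pair identity
\[
X_{*,nm}=X_{*n}^{\oplus m}=X_{*m}^{\oplus n},\qquad n,m\in\supp\Omega.
\]

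The combinatorial heart of Part 1 is to analyze this identity. For $n,m\in\supp\Omega$ set $d'=\gcd(n,m)$; I claim $X_{*n}$ is block diagonal with all $d'\times d'$ diagonal blocks equal to a common matrix $Y_{n,m}$. Indeed, if $p,q\in\{1,\ldots,n\}$ lie in distinct $d'$-blocks of $\{1,\ldots,n\}$, then since $\{(a-1)n\bmod m:a=1,\ldots,m\}$ exhausts the multiples of $d'$ in $\{0,\ldots,m-1\}$, one can choose $a$ so that $(a-1)n+p$ and $(a-1)n+q$ fall in distinct $m$-blocks of $\{1,\ldots,nm\}$; equating the corresponding entries of $X_{*n}^{\oplus m}$ and $X_{*m}^{\oplus n}$ forces $(X_{*n})_{p,q}=O$. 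Matching the $mn/d'$ diagonal $d'$-blocks on the two sides---cyclically with coprime periods $n/d'$ and $m/d'$---then collapses all of them to a single $Y_{n,m}$. To descend to block size $d$, I would apply this same pair analysis iteratively to the identities $Y_{n,m_1}^{\oplus n/d'_{m_1}}=Y_{n,m_2}^{\oplus n/d'_{m_2}}$ for successive choices of $m_i\in\supp\Omega$, obtaining $X_{*n}=W^{\oplus n/\gcd(n,m_1,\ldots,m_k)}$; since $\gcd\supp\Omega=d$ and $d\mid n$, finitely many steps yield $X_{*n}=X_*^{\oplus n/d}$ for some $X_*\in\mathcal{S}^{d\times d}$. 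The independence of $X_*$ from $n$ is immediate from the pair identity applied to any two $n,n'\in\supp\Omega$.

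For Part 2, I propose to set
\[
\tilde\Omega_n:=\bigl\{A_1\oplus\cdots\oplus A_p:p\geq 1,\ A_i\in\Omega\cup\{X_*\},\ \textstyle\sum_i|A_i|=n\bigr\},\quad n\in\mathbb{N}d,
\]
and to define $\tilde f(A_1\oplus\cdots\oplus A_p):=\tilde f(A_1)\oplus\cdots\oplus\tilde f(A_p)$ with $\tilde f(X_*):=X_*$ and $\tilde f(Y):=f(Y)$ for $Y\in\Omega$. By construction $\tilde\Omega$ is the smallest nc set containing $\Omega\cup\{X_*\}$, and $\supp\tilde\Omega=\mathbb{N}d$, since every multiple of $d$ is a sum of $d$'s (from $X_*$-blocks) and of elements of $\supp\Omega\subseteq\mathbb{N}d$. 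Granted that $\tilde f$ is well defined and respects similarities, uniqueness of the fixed point follows immediately: any $\tilde f$-fixed element has every block of its decomposition a fixed point, and by Part 1 every $f$-fixed element of $\Omega$ is a direct sum of copies of $X_*$, so $Z=X_*^{\oplus n/d}$.

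The main obstacle is therefore the well-definedness and nc property of $\tilde f$. Given two decompositions $Z=A_1\oplus\cdots\oplus A_p=B_1\oplus\cdots\oplus B_q$ of the same matrix, the plan is to pass to the finest common refinement $Z=C_1\oplus\cdots\oplus C_r$ and compare the two outputs blockwise, using the direct-sum property of $f$ to split $f(A_i)$ or $f(B_j)$ whenever the refinement further splits an $\Omega$-component, and the Part 1 fixed-point characterization to reconcile any sub-$X_*$ blocks appearing inside $\Omega$-elements. Similarity respect is then inherited from $f$ on $\Omega$, augmented by the observation that any similarity preserving a bare $X_*$ block inside $\tilde\Omega_d$ must commute with $X_*$ and so acts trivially on that block.
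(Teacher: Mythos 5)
Your Part~1 is correct, though it takes a different route from the paper. The paper isolates the statement as a separate ``noncommutative singleton lemma'': it picks finitely many $n_1,\ldots,n_k\in\supp\Omega$ realizing the gcd $d$, passes to $s={\rm lcm}(n_1,\ldots,n_k)$, and uses a cyclic diagonal shift $S$ together with B\'ezout's identity to get $S^dX_{*s}=X_{*s}$, which forces $X_{*s}$ to be a direct sum of $s/d$ copies of one $d\times d$ matrix. Your direct entry-matching analysis of $X_{*n}^{\oplus m}=X_{*m}^{\oplus n}$, followed by the iteration over $m_1,\ldots,m_k$, accomplishes the same thing; the details you leave implicit (that $\{(a-1)n\bmod m\}$ covers all multiples of $d'$ modulo $m$, and the CRT matching of the diagonal $d'$-blocks) do check out.

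Part~2 is where the real difficulty lies, and your well-definedness argument has a genuine gap. Your $\tilde\Omega$ and $\tilde f$ are exactly the paper's, but the ``finest common refinement'' plan cannot be carried out with the tools you invoke. First, the refinement pieces $C_1,\ldots,C_r$ need not belong to $\Omega\cup\{X_*\}$ at all (cutting an element of $\Omega_7$ at positions dictated by a $2+5$ decomposition on the other side produces a $3\times 3$ fragment that lives nowhere), so they are outside the domain of $f$ and $\tilde f$. Second, and more fundamentally, when the refinement cuts an $\Omega$-component $A_i$ as $B\oplus X_*$, the hypothesis that $f$ respects direct sums gives you nothing: it applies only to direct sums of elements of $\Omega$, and $X_*\notin\Omega$ when $d\notin\supp\Omega$. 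Nor does Part~1 help, since it describes only the fixed points of $f$ and $A_i$ need not be one. The example the paper gives right after the theorem shows this is not a technicality: for an $f$ respecting direct sums but not similarities, the extension genuinely fails to exist, so any correct proof must use the similarity hypothesis in an essential way --- whereas your sketch invokes it only as an afterthought for ``a bare $X_*$ block,'' which does not touch the mixed configurations above.

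The missing idea is an amplification trick. One proves directly that $\tilde f$ respects intertwinings $SY=XS$ between two decomposed elements, blockwise via $S_{\alpha\beta}Y_\beta=X_\alpha S_{\alpha\beta}$. In the mixed case, say $X_\alpha=X_*$ and $Y_\beta\in\Omega_{k_\beta d}$, one stacks $\ell$ copies of $S_{\alpha\beta}$ into a column to obtain an intertwining between $Y_\beta$ and $\bigoplus_{\gamma=1}^{\ell}X_*=X_{*\ell d}$ for some $\ell$ with $\ell d\in\supp\Omega$; both of these lie in $\Omega$, so the intertwining property of $f$ applies, and since $X_{*\ell d}$ is a fixed point one deduces $S_{\alpha\beta}f(Y_\beta)=X_*S_{\alpha\beta}$. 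Well-definedness of $\tilde f$ is then the special case $X=Y$, $S=\mathbf{I}$. Your uniqueness argument for the fixed point of $\tilde f|_{\tilde\Omega_n}$ is fine once well-definedness is in place.
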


Theorem \ref{thm:thfixedp-1} establishes the following general
principle. Given a mapping on matrices which respects
 matrix sizes and direct sums of matrices, and given that this mapping has a unique fixed point in each matrix
dimension, all these fixed points arise as multiple copies of a
single matrix $X_*$; moreover, if the mapping is a nc function,
then it can be extended to a nc function whose domain includes the
matrix $X_*$. This principle can
 be used further to generalize \emph{any fixed point theorem where the fixed point is unique} to the
noncommutative setting; in Theorem \ref{thm:thfixedp-2}, we will
present a nc version of the Banach Fixed Point Theorem.

\begin{remark} A statement analogous to part 2 of Theorem \ref{thm:thfixedp-1} for a mapping $f$ which
respects direct sums but  not necessarily respects similarities is
false, as the
 following example shows.
\end{remark}
\begin{ex}Let  $\mathcal{M} = \mathcal{R}=\mathbb{Z}_{2}(=\mathbb{Z}/2 \mathbb{Z})$, and let $\Omega$ be defined
as follows:
 $$\supp{\Omega} = \{ 2n + 5m, \quad n\in \mathbb{N}, m\in \mathbb{N}\},$$
$\Omega_{2} = \{ \mathbf{O}_{2\times2},\mathbf{I}_{2}\},$
$ \Omega_{5} = \{ \mathbf{O}_{5\times5}, \mathbf{I}_{5},
 \diag[0, 0, 1, 0, 0]\}$, and $\Omega_{2n + 5m}$
is a finite set of matrices which are direct sums, in any possible
order, of matrices from $\Omega_{2}$ and $\Omega_{5}$. Clearly, $\Omega $ is a nc set over $ \mathbb{Z}_{2}.$
Define
$$ f(\mathbf{O}_{2 \times 2}) = \mathbf{I}_{2} = f(\mathbf{I}_{2}),$$
$$ f(\mathbf{O}_{5 \times 5}) = \mathbf{I}_{5} = f(\mathbf{I}_{5}),$$
$$f(\diag[0, 0, 1, 0, 0]) = \mathbf{O}_{5 \times 5}.$$
Then $f$ is uniquely
determined. Indeed, elements of $\Omega$ are diagonal matrices, and the sequence 0, 0, 1, 0, 0 appears on the
diagonal of a matrix $X$ from $\Omega$ if and only if this sequence belongs to a copy of the matrix
$A = \diag[0, 0, 1, 0, 0] \in \Omega_{5},$ i.e. $X = Y \oplus A \oplus Z,$ with $Y, Z \in \Omega$. Then
$$ f(X) = f(Y) \oplus f(A) \oplus f(Z) = f(Y) \oplus \mathbf{O}_{5 \times 5} \oplus f(Z).$$
If $Y$ and $Z$ have no copies of $A$ on the diagonal, then
$$f(X) = \mathbf{I}\oplus \mathbf{O}_{5\times 5}\oplus \mathbf{I}.$$
Otherwise, applying the same argument for $Y$ (and $Z $) in the place of $X$,
 we eventually obtain that $f(X)$ is uniquely determined and equal a diagonal matrix with
 $\mathbf{O}_{5 \times 5}$ at the same block diagonal spots as copies of $A$ in $X,$ and $1$
at the other diagonal spots. Suppose there exists a nc set $\tilde{\Omega}\supseteq \Omega$ such that
$\supp{\tilde{\Omega}} = \mathbb{N}$ (clearly, we have $d={\rm gcd}(2,5)=1$), and a mapping $\tilde{f}:\tilde{
\Omega} \rightarrow \tilde{\Omega}$ satisfying
\begin{itemize}
\item $\tilde{f}(\tilde{\Omega}_{n}) \subseteq \tilde{\Omega}_{n};$ \item   $\tilde{f}$ respects  direct  sums:
$\tilde{f}(X\oplus Y) =\tilde{f}(X) \oplus \tilde{f}(Y),\quad X, Y \in \tilde{\Omega};$ \item  $\tilde{f}
|_{\Omega} = f;$ \item For every $n \in \mathbb{N}$ the mapping $\tilde{f} |_{\tilde{\Omega}_{n}}$ has a unique
fixed point
$$  X_{* n} = \bigoplus_{\alpha = 1}^{n}X_{*}.$$
\end{itemize}
Then $\tilde{f}(\diag[0, 0, 1, 0, 0]) =f(\diag[0, 0, 1, 0, 0]) = \mathbf{O}_{5\times 5}. $ On the other hand,
since $f(\mathbf{I}_2)=\mathbf{I}_2$, we must have $1=\mathbf{I}_1\in\tilde{\Omega}_1$, $\tilde{f}(1)=1$, and
\begin{multline*}
\tilde{f}(\diag[0, 0, 1, 0, 0]) = \tilde{f}(\mathbf{O}_{2\times 2}\oplus 1 \oplus \mathbf{O}_{2 \times 2}) =
 \tilde{f}(\mathbf{O}_{2\times 2})\oplus \tilde{f}(1) \oplus \tilde{f}( \mathbf{O}_{2 \times 2})    \\
={f}(\mathbf{O}_{2\times 2})\oplus \tilde{f}(1) \oplus {f}( \mathbf{O}_{2 \times 2}) = \mathbf{I}_{2}\oplus 1
\oplus \mathbf{I}_{2} = \mathbf{I}_{5},
\end{multline*}
 i.e. we obtain a contradiction.
\end{ex}

We will need the following definition of a complex (real) operator
space (see \cite{ER,Ruan}) which gives rise to a natural topology
on a nc space. Let $\field$ be a field, $\field=\mathbb{C}$ or
$\field=\mathbb{R}$.
 A vector space $\vecspace{V}$ over $\field$ is called an \emph{operator space} if a
 sequence of Banach space norms $\|\cdot\|_n$ on $\mat{\vecspace{V}}{n}$, $n=1,2,\ldots$ is
defined so that the following two
 conditions hold:
 \begin{itemize}
     \item For every $n,m\in\mathbb{N}$, $X\in\mat{\vecspace{V}}{n}$ and
     $Y\in\mat{\vecspace{V}}{m}$,
     \begin{equation*}
 \| X\oplus Y\|_{n+m}=\max\{\|X\|_n,\| Y\|_m\};
     \end{equation*}
     \item For every $n\in\mathbb{N}$, $X\in\mat{\vecspace{V}}{n}$ and
     $S,T\in\mat{\field}{n}$,
     \begin{equation*}
 \| SXT\|_{n}\le\|S\|\,\|X\|_n\|T\|,
     \end{equation*}
    where $\|\cdot\|$ denotes the $(2,2)$ operator norm on
    $\mat{\field}{n}$.
 \end{itemize}

\begin{thma} \label{thm:thfixedp-2}
Let $\mathcal{S}$ be a set, $O\in\mathcal{S}$, and let $\Omega
\subseteq \mathcal{S}_{\rm nc}$ respect direct sums of matrices.
Suppose that $\Omega_{n}$ is a complete metric space with respect
to a metric $\rho_{n}$ for every $n \in \supp{\Omega}.$ Let
$f\colon \Omega \rightarrow \Omega$ satisfy
\begin{itemize}
\item $f(\Omega_{n}) \subseteq \Omega_{n}$, $ n \in \supp{\Omega};$ \item   $f$ respects  direct  sums:
$f(X\oplus Y) = f(X) \oplus f(Y),\quad X, Y \in \Omega;$ \item For every $n \in \supp{\Omega}$ there exists
$c_{n}\colon 0\le c_{n} <1$ so that
$$ \rho_{n}(f(X), f(Y)) \leq c_{n}\rho_{n}(X, Y), \quad X, Y \in \Omega_{n}.$$
 \end{itemize}
Let $d = {\rm gcd} \{n: n \in \supp{\Omega}\}.$ Then:
\begin{enumerate}
\item[1.] There exists $X_{*} \in \mathcal{S}^{d\times d}$ such that for every $n \in \supp{\Omega}$ the mapping
$f|_{\Omega_{n}}$ has  a unique fixed point $X_{*n} = \bigoplus_{\alpha = 1}^{n/d}X_{*}.$ \item[2.] Suppose
additionally that $\mathbb{F}$ is a field,  $\mathbb{F} = \mathbb{C}$ or $\mathbb{F} = \mathbb{R}$, $\mathcal{S}
= \mathcal{V}$ is an operator space over $\mathbb{F},$ so that for every $ n \in \supp{\Omega}$ one has
$\rho_{n}(X, Y) =  \| X - Y \|_{n}, \  X, Y \in \Omega_{n}$, $O=0\in\mathcal{V}$, and that $f$ is a nc function.
Then the conclusions of Theorem \ref{thm:thfixedp-1}, part 2, hold; moreover $\tilde{\Omega}$ and $\tilde{f}$ can
be chosen such that for every $n \in \mathbb{N}d:$
\begin{itemize}
\item $\tilde{\Omega}_{n}$ is a complete metric space with respect to the metric
$$\tilde{\rho_{n}}(X, Y) =  \| X - Y \|_{n}, \quad  X, Y \in \tilde{\Omega}_{n}$$ (which
extends the metric $\rho_{n}$ for $n \in \supp{\Omega})$; \item There exists $\tilde{c}_{n}\colon 0 \le
\tilde{c}_{n} < 1$   (obviously, $\tilde{c}_{n} \geq c_{n}$ for $ n \in \supp{\Omega}$)
 such that
\begin{equation}\label{eq:rel-contract}
\| \tilde{f}(X) - \tilde{f}(X_{* n}) \|_{n} \leq \tilde{c}_{n} \| X - X_{* n} \|_{n}, \quad X \in
\tilde{\Omega}_{n};
\end{equation}
 \item For an arbitrary $  X^{0} \in \tilde{\Omega}_{n},$ define $X^{j + 1} = f(X^{j}), \ j =
0, 1, \ldots.$ Then  $X_{* n} = \lim_{j \rightarrow \infty} X^{j}.$ Moreover,
\begin{equation}\label{eq:conv_rate}
 \| X^{j} - X_{* n} \|_{n} \  \leq (\tilde{c}_{n})^{j}
\| X^{0} - X_{*n} \|_{n} ,\quad j = 1, 2, \ldots.\end{equation}
 \end{itemize}
\end{enumerate}
\end{thma}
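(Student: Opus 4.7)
The plan is to reduce Theorem \ref{thm:thfixedp-2} to Theorem \ref{thm:thfixedp-1} and install the analytic data using the operator space norm axioms. For Part 1, each $f|_{\Omega_n}$ is a strict contraction on the complete metric space $(\Omega_n,\rho_n)$, so the classical Banach Fixed Point Theorem supplies a unique fixed point $X_{*n}$ for every $n\in\supp\Omega$. All hypotheses of Theorem \ref{thm:thfixedp-1} are then in force, and its Part 1 produces $X_*\in\mathcal{V}^{d\times d}$ with $X_{*n}=\bigoplus_{\alpha=1}^{n/d}X_*$.

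For Part 2, I first invoke Theorem \ref{thm:thfixedp-1} Part 2 to obtain a nc set $\tilde\Omega\supseteq\Omega$ with $\supp\tilde\Omega=\mathbb{N}d$ and a nc function $\tilde{f}\colon\tilde\Omega\to\tilde\Omega$ extending $f$, with unique fixed point $X_{*n}$ in each $\tilde\Omega_n$. The essential structural property to extract from (or build into) the construction is the \emph{enlargement property}: every $X\in\tilde\Omega_n$ satisfies $X\oplus X_{*m}\in\Omega_{n+m}$ for some $m\in\supp\Omega$ with $n+m\in\supp\Omega$, such $m$ existing because $\supp\Omega$ generates $\mathbb{N}d$ as an additive semigroup. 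Applying the nc intertwining property \eqref{eq:intertw} to the obvious block-column and block-row matrices that intertwine $X\oplus X_{*m}$ with $X_{*m}$, together with $f(X_{*m})=X_{*m}$, yields
\begin{equation*}
f(X\oplus X_{*m})=\tilde{f}(X)\oplus X_{*m}.
\end{equation*}

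Plugging this identity into the contraction estimate on $\Omega_{n+m}$ and invoking the direct-sum axiom of the operator space norm gives, for all $X,Y\in\tilde\Omega_n$,
\begin{equation*}
\|\tilde{f}(X)-\tilde{f}(Y)\|_n=\|f(X\oplus X_{*m})-f(Y\oplus X_{*m})\|_{n+m}\le c_{n+m}\|(X-Y)\oplus 0_m\|_{n+m}=c_{n+m}\|X-Y\|_n.
\end{equation*}
Hence $\tilde{f}|_{\tilde\Omega_n}$ is actually a strict contraction with constant $\tilde{c}_n:=c_{n+m}<1$ (for $n\in\supp\Omega$ one may take $\tilde{c}_n=c_n$). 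Specializing $Y=X_{*n}$ gives \eqref{eq:rel-contract}; iterating it gives \eqref{eq:conv_rate} and the convergence $X^j\to X_{*n}$.

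For the completeness of $\tilde\Omega_n$ as a subspace of $(\mathcal{V}^{n\times n},\|\cdot\|_n)$, I replace it by its norm closure: the direct-sum axiom keeps the closure closed under direct sums, the Lipschitz estimate just established lets $\tilde{f}$ extend continuously, and the second operator-space axiom $\|SXT\|_n\le\|S\|\|X\|_n\|T\|$ provides the continuity of left and right actions needed for $\tilde{f}$ to remain a nc function on the closure. The main technical obstacle is arranging the construction of $\tilde\Omega$ so that the enlargement property is available for every element; once this is in hand, the operator space axioms reduce all remaining assertions to routine continuity verifications.
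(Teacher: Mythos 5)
Your Part 1 is exactly the paper's argument (Banach fixed point theorem on each $\Omega_n$, then Theorem \ref{thm:thfixedp-1}). Part 2, however, has a genuine gap, and it sits precisely at the ``enlargement property.'' You claim that every $X\in\tilde\Omega_n$ satisfies $X\oplus X_{*m}\in\Omega_{n+m}$ for a suitable $m\in\supp\Omega$. This is false in general. The set $\Omega$ is only assumed to respect direct sums of \emph{its own} elements, and $\tilde\Omega_n$ necessarily contains matrices such as $X_*\oplus X_1\oplus X_*$ with $X_1\in\Omega$ and $d\notin\supp\Omega$; appending $X_{*m}$ on the right absorbs only the trailing copy of $X_*$, and the leading block $X_*$ (of size $d\notin\supp\Omega$) prevents the result from being a direct sum of elements of $\Omega$. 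Since the contraction hypothesis $\rho_{n+m}(f(A),f(B))\le c_{n+m}\rho_{n+m}(A,B)$ is available only on $\Omega_{n+m}$, not on $\tilde\Omega_{n+m}$, your key estimate $\|\tilde f(X)-\tilde f(Y)\|_n\le c_{n+m}\|X-Y\|_n$ does not follow. A strong warning sign: that estimate is a full two-variable Lipschitz bound on $\tilde\Omega_n$, which is exactly what Remark \ref{rem:open-question} declares to be an open question; the theorem only asserts the one-variable bound \eqref{eq:rel-contract} relative to the fixed point. Your closure step inherits the same defect, since without a genuine Lipschitz bound $\tilde f$ need not even be continuous, so it need not extend to the norm closure.

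The paper's route avoids all of this. For \eqref{eq:rel-contract} it uses only the block decomposition $X=\bigoplus_\alpha X_\alpha$ (each $X_\alpha\in\Omega_{j_\alpha}$ or $X_\alpha=X_*$), decomposes $X_{*n}$ compatibly as $\bigoplus_\alpha X_{*j_\alpha}$, and applies the operator-space identity $\|\bigoplus_\alpha Z_\alpha\|=\max_\alpha\|Z_\alpha\|$ blockwise, using the contraction hypothesis only on the blocks that actually lie in $\Omega$; this yields $\tilde c_n=\max_{k\in\supp\Omega,\,k\le n}c_k<1$. For completeness it does not take closures at all: it shows by induction on $k$ that $\tilde\Omega_{kd}=\Omega_{kd}\cup\bigcup_{k'+k''=k}(\tilde\Omega_{k'd}\oplus\tilde\Omega_{k''d})$ is already closed in $\mathcal V^{kd\times kd}$, being a finite union of direct sums of closed sets. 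You should replace your two-variable estimate by this blockwise one-variable argument and drop the closure step.
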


\begin{remark}\label{rem:open-question}
We do not know whether one can find  $\tilde{\Omega}$ and
$\tilde{f}$ such that for every $n \in \mathbb{N}d$ there exists
${c}_{n}^\circ\colon 0 \le {c}_{n}^\circ < 1$ satisfying
\[ \| \tilde{f}(X) - \tilde{f}(Y) \|_{n} \leq {c}_{n}^\circ \| X - Y \|_{n},\quad X, Y\in\tilde{\Omega},\]
and leave this as an open question.
\end{remark}
\begin{remark}\label{rem:quotient-space}
We now give a useful interpretation of part 2 of Theorem
\ref{thm:thfixedp-2}. First of all, in a general setting of part 1
of Theorem \ref{thm:thfixedp-2}, we define a relation $\sim$ on a
$\Omega\subseteq\mathcal{S}_{\rm nc}$ as follows: $X\sim Y$ if
both $X$ and $Y$ are direct sums of several copies of the same
matrix over $\mathcal{S}$. Observe that $\sim$ is an equivalence
relation on $\Omega$, and define the quotient set
$\widehat{\Omega}:=\Omega/\!\!\sim$. We will call the equivalence
class $\widehat{X}\in\widehat{\Omega}$ of $X\in\Omega$ a
\emph{noncommutative singleton}. Since $f\colon\Omega\to\Omega$
preserves the equivalence $\sim$, it gives rise to the quotient
mapping $\widehat{f}\colon\widehat{\Omega}\to\widehat{\Omega}$.
The conclusion of part 1 of Theorem \ref{thm:thfixedp-2} means
that the fixed points $X_{*n}$ are equivalent, and therefore the
mapping $\widehat{f}$ has a unique fixed point. However, the
assumptions of part 1 are too general to have a good
interpretation in terms of the mapping $\widehat{f}$. If one
strengthen them as in part 2, then one defines a function
$\rho\colon\Omega\times\Omega\to\mathbb{R}_+$ as follows. For any
$n,m\in\supp\Omega$, $X\in\Omega_n$, $Y\in\Omega_m$, set
$$\rho(X,Y)=\left\|\bigoplus_{\alpha=1}^mX-\bigoplus_{\beta=1}^nY\right\|_{nm}.$$
Since $\mathcal{S}=\mathcal{V}$ is an operator space, $\rho$
extends $\rho_n$ for every $n\in\supp\Omega$. Clearly, $\rho$ is a
pseudometric on $\Omega$, and $\rho(X,Y)=0$ if and only if $X\sim
Y$. Observe that $\widehat{\Omega}$ is a metric space with respect
to the quotient metric $\widehat{\rho}$ defined by
$\widehat{\rho}(\widehat{X},\widehat{Y}):=\rho(X,Y)$. Moreover,
for a nc set $\widetilde{\Omega}\supseteq\Omega$ there is a
natural embedding
$\widehat{\Omega}\hookrightarrow\widehat{\Omega^\sim}$, and
$\widehat{f}$ admits an extension $\widehat{{f}^\sim}$ to
$\widehat{{\Omega}^\sim}$ which also has a unique fixed point, the
equivalence class of a matrix $X_*\in\mathcal{V}^{d\times d}$,
such that $X_{*n}=\bigoplus_{\alpha=1}^{n/d}X_*$ for every
$n\in\supp\Omega$.

 Even though part 2 of Theorem
\ref{thm:thfixedp-2} can be interpreted as a unique fixed point
theorem in a metric space, it does not follow directly from the
classical Banach Fixed Point Theorem for two reasons. First, the
possibility that
 the smallest
possible Lipschitz constant for $\widehat{f}$, that is
$\sup_{n\in\supp\Omega}c_n$,  is equal to $1$, is not ruled out.
Second, the metric space $\widehat{\Omega}$ may be incomplete, as
the following example shows.
\end{remark}
\begin{ex}\label{ex:incopmplete}
Let $\Omega=\mathcal{\mathbb{C}}_{\rm nc}$, with the $(2,2)$
operator norm topology on $\mathbb{C}^{n\times n}$, $n=1,\ldots$.
Define recursively the following sequence in $\Omega$:
$$X^0:=1\in\Omega_1,\quad X^{j+1}:=\begin{bmatrix}
X^j & \frac{1}{2^{j}}I_{2^j}\\
\frac{1}{2^{j}}I_{2^j} & X^j
\end{bmatrix}\in\Omega_{2^{j+1}}.$$
This is a Cauchy sequence in pseudometric $\rho$. Indeed,
$$\rho(X^{j+k},X^j)\le\|X^{j+k}-X^{j+k-1}\|_{2^{j+k}}+\cdots+\|X^{j+1}-X^j\|_{2^{j+1}}=\frac{1}{2^{j+k-1}}+\cdots+\frac{1}{2^j}\to 0$$
 as $j,k\to\infty$. Correspondingly,
the sequence $\{\widehat{X^j}\}_{j=1,\ldots}\in\widehat{\Omega}$
is Cauchy in metric $\widehat{\rho}$:
$$\widehat{\rho}(\widehat{X^{j+k}},\widehat{X^j})=\rho(X^{j+k},X^j)\to
0$$
 as $j,k\to\infty$. Since matrices $X^j$ have infinitely increasing number
 of nonzero
 entries in the first row, and  the sequence $X^{j}_{1k}$ is
 eventually constant for every fixed $k$, and the entries
 $X^{j}_{1k}$ continuously depend on matrices $X^j$, the
 representatives of the class $\widehat{X}:=\lim_{j\to\infty}\widehat{X^j}$
 (provided the limit exists) would have infinitely increasing number
 of nonzero
 entries in the first row as the matrix size of these representatives increases. This is, however,
  impossible for elements
 of $\widehat{\Omega}$, thus the sequence $\{\widehat{X^j}\}_{j=1,\ldots}$
 does not converge, and the metric space $\widehat{\Omega}$ is
 incomplete.
\end{ex}

\begin{ex}\label{ex:5}
In the setting of Example \ref{ex:4}, restrict $f_t$ to the set
$\Omega$ of all matrices of the $(2,2)$ operator norm at most 1.
Then $\supp\Omega =\mathbb{N}$, and the function $f_t$ is a
self-mapping of $\Omega$ for all $t\colon |t|\le 1$. For every
$t\colon |t|<1$, the function $f_t$ satisfies the assumptions of
Theorem \ref{thm:thfixedp-2} (with the metric $\rho_n$ induced by
the $(2,2)$ operator norm  and with
 $c_n=t$ for all $n$), and we obtain the conclusion of the theorem with $X_*=\mathbf{O}_{1\times 1}$ (and since
$d=1$, part 2 of the theorem becomes trivial). The function $f_1$ does not satisfy the contractivity assumption
with any $c_n<1$, and $f_1$ has a plenty of fixed points: every diagonal matrix is a fixed point.
\end{ex}

Our proof of Theorem \ref{thm:thfixedp-1} is based on the
following lemma (whose name is explained in Remark
\ref{rem:quotient-space}).
\begin{lemma}[\textbf{Noncommutative singleton lemma}]\label{lem:nc singleton}
Let $\mathcal{S}$ be a set, $O\in\mathcal{S}$. Let $\Omega
\subseteq \mathcal{S}_{\rm nc}$ respect direct sums of matrices
and be of the form $\Omega=\{X_n\}_{n\in\supp\Omega}$ with
$X_n\in\Omega_n$. Then there exists $X\in \mathcal{S}^{d\times
d}$, with $d=\gcd\{n\colon n\in\supp\Omega\}$, such that
\begin{equation}\label{eq:singleton}
X_n=\bigoplus_{\alpha=1}^{n/d}X,\quad n\in\supp\Omega.
\end{equation}
\end{lemma}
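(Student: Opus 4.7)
My approach is to extract strong block-structural constraints on each $X_m$ from the uniqueness of direct-sum representatives, and then read off the conclusion.

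Observe first that $S := \supp\Omega$ is closed under addition: for $m,n\in S$, the matrix $X_m\oplus X_n$ lies in $\Omega_{m+n}$, so $m+n\in S$ and $X_{m+n}=X_m\oplus X_n$; the same applies to $X_n\oplus X_m$, which forces the commutativity identity
\[
X_m\oplus X_n \;=\; X_n\oplus X_m
\]
as an equality of $(m+n)\times(m+n)$ matrices. The main technical step of the proof is to compare this identity entry by entry and extract, for any $n,m\in S$ with $n\le m$, three consequences: (i) the top-left and bottom-right $n\times n$ sub-blocks of $X_m$ are both equal to $X_n$; (ii) the four off-diagonal rectangles giving cuts of $X_m$ at positions $n$ and $m-n$ (namely the rows $1,\ldots,n$ by columns $n+1,\ldots,m$ rectangle and its three symmetric mates) consist entirely of $O$; (iii) the top-left $(m-n)\times(m-n)$ sub-block of $X_m$ coincides with its sub-block at rows and columns $n+1,\ldots,m$, i.e.\ $X_m$ is translation-invariant by $n$.

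Since $\gcd S = d$, standard numerical-semigroup theory supplies a $k_0$ with $kd\in S$ for every $k\ge k_0$. Fix $m\in S$ with $m\ge 2k_0 d$. For each $p=kd$ with $0<k<m/d$, at least one of $p$ and $m-p$ is a multiple of $d$ of size at least $k_0 d$, hence in $S$; applying (ii) with the corresponding value of $n$ clears both off-diagonal rectangles straddling the cut through $p$. All such cuts are therefore present, so $X_m$ is block diagonal in $d\times d$ blocks, $X_m=\diag(Y_1,\ldots,Y_{m/d})$. Using (iii) with $n=k_0 d$ and then with $n=(k_0+1)d$ yields $Y_i=Y_{i+k_0}$ and $Y_i=Y_{i+k_0+1}$ in the valid index ranges; since $\gcd(k_0,k_0+1)=1$, a short combinatorial chase forces all $Y_i$ to coincide with a single matrix $X\in\mathcal{S}^{d\times d}$.

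Finally, for arbitrary $m\in S$ one picks $n\in S$ with $m+n\ge 2k_0 d$, applies the previous paragraph to get $X_{m+n}=\bigoplus_{\alpha=1}^{(m+n)/d}X$, and reads off the top-left $m\times m$ block via (i) and the identity $X_{m+n}=X_m\oplus X_n$ to conclude $X_m=\bigoplus_{\alpha=1}^{m/d}X$, as desired; independence of $X$ from the choice of sufficiently large $m$ follows at once from (i) applied to two such sizes and their sum. I expect the main obstacle to be the careful entrywise bookkeeping that establishes (i)--(iii) from the commutativity identity; once those are in hand, the numerical-semigroup combinatorics and the translation-invariance chase are routine.
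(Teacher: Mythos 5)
Your proof is correct, but the key technical step is implemented differently from the paper's. The paper picks finitely many $n_1,\dots,n_k\in\supp\Omega$ with $\gcd=d$, passes to $s=\mathrm{lcm}(n_1,\dots,n_k)$, and observes that $X_s=\bigoplus_{\alpha=1}^{s/n_j}X_{n_j}$ is invariant under the \emph{cyclic} diagonal shift by $n_j$ for each $j$; Bezout's identity then gives invariance under the shift by $d$, which together with block-diagonality forces $X_s=\bigoplus_{\alpha=1}^{s/d}X$. You instead work entirely with the commutation identity $X_m\oplus X_n=X_n\oplus X_m$ (which is where the singleton hypothesis enters in both arguments), extract non-cyclic translation-invariance and the clearing of block cuts, invoke the numerical-semigroup fact that all sufficiently large multiples of $d$ lie in $\supp\Omega$, and finish with a Fine--Wilf argument using the coprime periods $k_0$ and $k_0+1$ on a sequence of length at least $2k_0$. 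Both proofs then conclude identically, by forming a large direct sum and comparing blocks to transfer the result to an arbitrary $n\in\supp\Omega$. The underlying idea --- converting periodicities indexed by $\supp\Omega$ into periodicity by $d$ --- is the same, but the paper's cyclic shift makes Bezout apply directly and avoids the numerical-semigroup input, while your version is more hands-on with the block structure at the cost of extra bookkeeping (your items (i)--(iii), which do check out entrywise) and an appeal to the Schur/Frobenius-type fact about numerical semigroups, itself ultimately a consequence of Bezout.
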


We also apply Lemma \ref{lem:nc singleton} to obtain a nc version
of another important principle of analysis.
\begin{thma}[\textbf{The principle of nested nc sets}]\label{thm:nested}
Let $\mathcal{S}$ be a set, $O\in\mathcal{S}$, and let $\Omega
\subseteq \mathcal{S}_{\rm nc}$ respect direct sums of matrices.
Suppose that $\Omega_{n}$ is a complete metric space with respect
to a metric $\rho_{n}$ for every $n \in \supp{\Omega}.$ Given a
sequence of sets
$$\Omega^j=\coprod_{n\in\supp\Omega}\Omega^j_n\subseteq\Omega,\quad j=1,\ldots$$
such that
\begin{itemize}
\item $\Omega^j$ respects direct sum of matrices, for every
$j=1,\ldots$;
    \item $\Omega^j_n$ is a non-empty closed subset of $\Omega_n$,
     for every $n\in\supp\Omega$,
    $j=1,\ldots$;
    \item $\Omega_1\supseteq\Omega_2\supseteq\cdots$;
    \item $\diam\Omega^j_n:=\sup_{X,Y\in\Omega^j_n}\rho_n(X,Y)\to 0$ as $j\to\infty$, for every
    $n\in\supp\Omega$,
\end{itemize}
there exists a unique $X_*\in \mathcal{S}^{d\times d}$, with
$d=\gcd\{n\colon n\in\supp\Omega\}$, such that
$$\bigcap_{j=1}^\infty\Omega^j=\left\{\bigoplus_{\alpha=1}^{n/d}X_*\right\}_{n\in\supp\Omega}.$$
\end{thma}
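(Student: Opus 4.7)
The plan is to reduce the theorem to the classical Cantor nested closed sets principle applied dimension by dimension, and then glue the resulting singletons together using the Noncommutative Singleton Lemma.

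First, I would fix $n \in \supp\Omega$ and focus on the chain $\Omega^1_n \supseteq \Omega^2_n \supseteq \cdots$ inside the complete metric space $(\Omega_n,\rho_n)$. Each $\Omega^j_n$ is non-empty, closed, and $\diam\Omega^j_n \to 0$, so the classical Cantor intersection theorem yields a unique point $X_n \in \Omega_n$ with $\bigcap_{j=1}^{\infty}\Omega^j_n = \{X_n\}$. Doing this for every $n \in \supp\Omega$ produces a family $\{X_n\}_{n\in\supp\Omega}$ with $X_n \in \Omega_n$.

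Next I would verify that the disjoint union $\Omega^* := \coprod_{n\in\supp\Omega}\{X_n\}$ respects direct sums. Given $n,m \in \supp\Omega$, for every $j$ we have $X_n \in \Omega^j_n$ and $X_m \in \Omega^j_m$; since $\Omega^j$ respects direct sums, $X_n \oplus X_m \in \Omega^j_{n+m}$. Taking the intersection over $j$ gives $X_n \oplus X_m \in \{X_{n+m}\}$, so $X_{n+m} = X_n \oplus X_m$. Thus $\Omega^*$ satisfies the hypotheses of the Noncommutative Singleton Lemma, and that lemma produces the desired $X_* \in \mathcal{S}^{d\times d}$ with $X_n = \bigoplus_{\alpha=1}^{n/d} X_*$ for all $n \in \supp\Omega$. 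Uniqueness of $X_*$ is then immediate, since any $X_*'\in\mathcal{S}^{d\times d}$ for which $\bigoplus_{\alpha=1}^{n/d}X_*' = X_n$ for some (equivalently, any) $n \in \supp\Omega$ must agree with $X_*$ block by block.

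There is no serious obstacle here: the ``vertical'' content (existence of a unique limit point in each $\Omega_n$) is handled by Cantor, and the ``horizontal'' compatibility across dimensions is handled entirely by the Noncommutative Singleton Lemma. The only small point to be careful about is that the theorem statement contains the chain $\Omega_1 \supseteq \Omega_2 \supseteq \cdots$, which I read as the obvious typo for $\Omega^1 \supseteq \Omega^2 \supseteq \cdots$, and that one needs $\supp\Omega \neq \emptyset$ for $d$ to make sense; under the standing convention that $\Omega$ is non-empty the argument above is complete.
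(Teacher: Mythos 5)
Your argument is correct and coincides with the paper's proof: Cantor's nested closed sets theorem applied in each $\Omega_n$, followed by the observation that the resulting singleton family respects direct sums (because each $\Omega^j$ does), and finally the Noncommutative Singleton Lemma. Your explicit verification of $X_{n+m}=X_n\oplus X_m$ just spells out the paper's one-line remark that the intersection of nc sets is an nc set.
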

\begin{remark}\label{rem:nested}
Similarly to Remark \ref{rem:quotient-space}, we can interpret the
conclusion of Theorem \ref{thm:nested} in terms of quotient sets
of nc sets. Namely, in the assumptions of Theorem
\ref{thm:nested}, the sequence of nested quotient sets
$\widehat{{\Omega}^j}$ has a nonempty intersection consisting of a
single point. Again, the assumptions on the metrics $\rho_n$ are
too general to have a good interpretation in terms of quotient
sets. If we assume, as in part 2 of Theorem \ref{thm:thfixedp-2},
that $\mathcal{S}=\mathcal{V}$ is an operator space and the
metrics $\rho_n$ are norm-induced, then we can define the
corresponding pseudometric $\rho$ on $\Omega$ which extends every
$\rho_n$ and the metric $\widehat{\rho}$ on $\widehat{\Omega}$ as
in Remark \ref{rem:quotient-space}. Clearly, the sequence
$\diam\widehat{{\Omega}^j}:=\sup_{\widehat{X},\widehat{Y}\in\widehat{\Omega^j}}
\widehat{\rho}(\widehat{X},\widehat{Y})$ is non-increasing.
However, it may happen that it does not converge to $0$. Also, the
sets $\widehat{\Omega^j}$ are not necessarily complete metric
spaces. So, both the assumptions of the classical principle of
nested closed sets may fail in our case, as confirmed by the
following example.
\end{remark}
\begin{ex}\label{ex:nested}
Let $\Omega=\mathbb{C}_{\rm nc}$, with the $(2,2)$ operator norm
topology on $\mathbb{C}^{n\times n}$, $n=1,\ldots$. Let
$$\Omega_n^j:=\left\{X\in\mathbb{C}^{n\times n}\colon
0\le\|X\|_n\le\frac{n}{n+j}\right\},\quad n,j=1,\ldots.$$ It is
easy to see that the sets $\Omega_n^j$ satisfy the conditions of
Theorem \ref{thm:nested} and, for a fixed $n$, $0_{n\times n}$ is
the unique common point of the sets $\Omega_n^j$. Define
$$X_n^j:=\frac{n}{n+j}I_n\in\Omega_n^j,\quad Y_n^j:=-X_n^j\in\Omega_n^j\quad n,j=1,\ldots.$$
We have $\|X_n^j-Y_n^j\|_n\to 2$ as $n\to\infty$, hence $\diam\widehat{\Omega^j}=2$ for every $j$, and
$\lim_{j\to\infty}\diam\widehat{\Omega^j}\neq  0$.
 The sequence
$\{X_n^j\}_{n=1,\ldots}$ is Cauchy in pseudometric $\rho$ for every fixed $j$, and so is the sequence
$\{\widehat{X_n^j}\}_{n=1,\ldots}$ in metric $\widehat{\rho}$. However, the latter has no limit in
$\widehat{\Omega^j}$, since such a limit would be a class whose representative are of norm 1, which is
impossible.
\end{ex}

We now present an application of Theorem \ref{thm:thfixedp-2} to
initial value problems for ODEs in nc spaces. The following
theorem is a nc counterpart of (a version of) the existence and
uniqueness theorem for solutions of ODEs in the classical setting
\cite[Theorem 3.7]{HN}.

\begin{thma}\label{thm:ODE}
Let $\mathcal{I}$ be an interval in $\mathbb{R}$ and let $t_0$ be
a point in the interior of $\mathcal{I}$. Let $\mathcal{V}$ be a
(real or complex) operator space and let
$\Xi\subseteq\mathcal{V}_{\rm nc}$ be a nc set, with $\Xi_n$ a
closed subspace of the Banach space $\mathcal{V}^{n\times n}$ for
every $n\in\supp\Xi$. Let $X_{0n}\in\Xi_n$, $n\in\supp\Xi$, and
let $\{X_{0n}\}_{n\in\supp\Xi}$ be a nc set. Suppose that $g\colon
\mathcal{I}\times\Xi\to\Xi$ satisfies the conditions:
\begin{itemize}
\item $g(t,\cdot)$ maps $\Xi_n$ to itself, $n\in\supp\Xi$, and
respects direct sums of matrices,
 for every $t\in\mathcal{I}$;
 \item $g_n:=g|_{\mathcal{I}\times\Xi_n}$ is continuous for every
    $n\in\supp\Xi$;
        \item There is a constant $C>0$ such that
    $$\|g(t,X)-g(t,Y)\|_n\le C\|X-Y\|_n$$ for every $t\in\mathcal{I}$,
    $n\in\supp\Xi$, and $X,Y\in\Xi_n$.
\end{itemize}
Then
\begin{enumerate}
\item[1.] There is a matrix $X_0\in\mathcal{V}^{d\times d}$, with
$d=\gcd\{n\colon n\in\supp\Xi\}$, such that
\begin{equation}\label{eq:initial}
X_{0n}=\bigoplus_{\alpha=1}^{n/d}X_0,\quad n\in\supp\Xi.
\end{equation}
\item[2.] There is a continuously differentiable function
$X_*\colon\mathcal{I}\to\mathcal{V}^{d\times d}$ such that, for
every $n\in\supp\Xi$,
\begin{equation}\label{eq:solution}
X_{*n}=\bigoplus_{\alpha=1}^{n/d}X_*\colon\mathcal{I}\to\Xi_n
\end{equation}
is a unique solution of the initial value problem for the
first-order ODE
\begin{equation}\label{eq:ODE}
\dot{X}=g_n(t,X), \quad X(t_0)=X_{0n}.
\end{equation}
\item[3.] Suppose that, in addition, $g(t,\cdot)$ respects
similarities of matrices, thus is a nc function, for every
$t\in\mathcal{I}$. Then there exist a nc set
$\widetilde{\Xi}(t)\supseteq\Xi$ with
$\supp\widetilde{\Xi}(t)=\mathbb{N}d$, $t\in\mathcal{I}$, so that
for every $n\in\mathbb{N}d$ one has a fiber bundle $\Psi_n$ with
the total space
$$\widetilde{\Xi}_n=\coprod_{t\in\mathcal{I}}\widetilde{\Xi}(t)_n
\subseteq\mathcal{I}\times\mathcal{V}^{n\times n},$$ the base
space $\mathcal{I}$ and the projection
$\pi_n\colon\widetilde{\Xi}_n\to\mathcal{I}$ defined by
$\pi_n\colon\widetilde{X}(t)\mapsto t$; a map $\widetilde{g}$ of
the set $\coprod_{t\in\mathcal{I}}\widetilde{\Xi}(t)$ to itself
such that $\widetilde{g}_n:=\widetilde{g}|_{\widetilde{\Xi}_n}$ is
a continuous bundle endomorphism
 for every
    $n=\mathbb{N}d$, that extends the function $g$ (where we identify all copies of $\Xi_n$ in
    $\widetilde{\Xi}(t)_n$, $t\in\mathcal{I}$);
 and, for every $n\in\mathbb{N}d$, a unique continuously differentiable cross-section of
the fiber bundle $\Psi_n$,
\begin{equation}\label{eq:solution'}
X_{*n}=\bigoplus_{\alpha=1}^{n/d}X_*\colon\mathcal{I}\to\widetilde{\Xi}_n,
\end{equation}
which is a solution of the initial value problem for the
first-order ODE
\begin{equation}\label{eq:ODE'}
\dot{X}=\widetilde{g}_n(t,X), \quad
X(t_0)=\bigoplus_{\alpha=1}^{n/d}X_{0}.
\end{equation}
\end{enumerate}
\end{thma}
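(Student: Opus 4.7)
The strategy is to combine the classical Picard-Lindel\"of argument with the noncommutative singleton lemma, and to invoke the extension machinery of Theorem \ref{thm:thfixedp-1} (part 2) for part 3. Part 1 is immediate from Lemma \ref{lem:nc singleton} applied to the nc set $\{X_{0n}\}_{n\in\supp\Xi}$, producing the required $X_0\in\mathcal{V}^{d\times d}$ satisfying \eqref{eq:initial}.

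For part 2 I reformulate \eqref{eq:ODE} as the integral fixed-point equation $X(t)=X_{0n}+\int_{t_0}^t g_n(s,X(s))\,ds$. Working first on an arbitrary compact subinterval $\mathcal{J}\subseteq\mathcal{I}$ with $t_0$ in its interior, I let $\mathcal{W}:=C(\mathcal{J},\mathcal{V})$ and endow it with the Bielecki-weighted norms
$$\|X\|_n:=\sup_{t\in\mathcal{J}}e^{-\lambda|t-t_0|}\|X(t)\|_{\mathcal{V},n},\qquad \lambda>C,$$
which give $\mathcal{W}$ the structure of an operator space (the direct-sum axiom holds because $\sup$ commutes with $\max$, and the multiplicative axiom is inherited pointwise). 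For each $n\in\supp\Xi$ let $\Omega_n^{\mathcal{J}}\subseteq\mathcal{W}^{n\times n}$ consist of continuous maps $\mathcal{J}\to\Xi_n$; these are complete metric spaces since $\Xi_n$ is closed in $\mathcal{V}^{n\times n}$, and pointwise direct sums make $\Omega^{\mathcal{J}}:=\coprod_n\Omega_n^{\mathcal{J}}$ respect direct sums. The Picard operator $T$ defined by the right-hand side above is a self-map of $\Omega^{\mathcal{J}}$ respecting direct sums (using $X_{0,n+m}=X_{0n}\oplus X_{0m}$ from part 1 and the direct-sum respect of $g$), and a routine Bielecki estimate yields $\|T[X]-T[Y]\|_n\le(C/\lambda)\|X-Y\|_n$. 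Part 1 of Theorem \ref{thm:thfixedp-2} then yields a unique fixed point $\bigoplus_{\alpha=1}^{n/d}X_*^{\mathcal{J}}$ in each dimension for some continuous $X_*^{\mathcal{J}}\colon\mathcal{J}\to\mathcal{V}^{d\times d}$, and the integral equation forces it to be continuously differentiable and to satisfy \eqref{eq:ODE}. Exhausting $\mathcal{I}$ by such $\mathcal{J}$ and patching by uniqueness delivers the global $X_*$ of \eqref{eq:solution}.

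For part 3 I extend $\Xi$ and $g$ fiberwise in $t$. For each $t\in\mathcal{I}$, the nc-closure construction behind part 2 of Theorem \ref{thm:thfixedp-1} applies: adjoin $X_*(t)\in\mathcal{V}^{d\times d}$ to $\Xi$ and close under direct sums and similarities to obtain $\widetilde{\Xi}(t)\supseteq\Xi$ with $\supp\widetilde{\Xi}(t)=\mathbb{N}d$, and extend $g(t,\cdot)$ via the nc-function identities, setting $\widetilde{g}(t,X_*(t)):=\dot X_*(t)$ (consistent because $\bigoplus\dot X_*=g_n(t,\bigoplus X_*)$ for $n\in\supp\Xi$ by part 2 combined with the direct-sum respect of $g$). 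Continuity of $X_*,\dot X_*,g$ in $t$ transfers to continuity of $\widetilde{g}_n$ as a bundle endomorphism of $\Psi_n$. By construction $X_{*n}(t):=\bigoplus_{\alpha=1}^{n/d}X_*(t)$ is a continuously differentiable cross-section of $\Psi_n$ solving \eqref{eq:ODE'}; its uniqueness follows by a padding argument: any alternative solution $Y$ in a dimension $n\in\mathbb{N}d\setminus\supp\Xi$, direct-summed with $X_{*m}$ for some $m\in\supp\Xi$ chosen so that $n+m\in\supp\Xi$ (which exists because $\supp\Xi$ contains all sufficiently large multiples of $d$), would yield a second solution in dimension $n+m$, contradicting the uniqueness already established in part 2.

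The main obstacle lies in part 3: showing that the pointwise-in-$t$ extensions $\widetilde{\Xi}(t),\widetilde{g}(t,\cdot)$ glue into a genuine continuous fiber bundle $\Psi_n$ and bundle endomorphism $\widetilde{g}_n$. This requires tracking continuity in $t$ through the explicit nc-closure construction from Theorem \ref{thm:thfixedp-1} and checking that enough regularity of $\widetilde{g}$ in $X$ is available to run the padding argument --- a point that is delicate in view of Remark \ref{rem:open-question}, but for which the relative-contraction bound \eqref{eq:rel-contract} of Theorem \ref{thm:thfixedp-2} is adequate.
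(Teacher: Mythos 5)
Your overall architecture matches the paper's: part 1 is Lemma \ref{lem:nc singleton} applied to $\{X_{0n}\}$, and part 2 runs the Picard operator on spaces of continuous $\Xi_n$-valued functions, regarded as a direct-sum-respecting contraction inside the nc space over $C(\mathcal{J},\mathcal{V})$, and then invokes Theorem \ref{thm:thfixedp-2}. Two of your choices differ in detail. First, you use Bielecki weights with $\lambda>C$ to get a contraction on an arbitrary compact subinterval in one stroke, whereas the paper fixes $\delta<1/C$, contracts in the plain sup norm on $[t_0,t_0+\delta]$, and patches overlapping intervals; your route saves the patching step, and your check that the weighted norms still satisfy the two operator-space axioms is correct. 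Second, in part 3 the paper does not extend $\Xi$ and $g$ fiberwise as you do: it applies part 2 of Theorem \ref{thm:thfixedp-2} to the Picard operator $f$ on the function space $\Omega$, obtains $\widetilde{\Omega}$ and $\widetilde{f}$ there, and only afterwards reads off $\widetilde{\Xi}(t)_n$ as the sets of values at $t$ of functions in $\widetilde{\Omega}_n$. Your assignment $\widetilde{g}(t,X_*(t)):=\dot X_*(t)$ is the one forced by consistency with $g$ on $\supp\Xi$ (the paper's displayed formula $\widetilde{g}(t,X_*(t))=X_*(t)$ appears to be a slip), but note that you should close $\Xi\cup\{X_*(t)\}$ under direct sums only, as in the proof of Theorem \ref{thm:thfixedp-1}; closing under similarities as well enlarges the domain beyond what that well-definedness argument covers.

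There is one genuine gap: the padding argument for uniqueness in part 3. An alternative solution $Y$ in dimension $n\in\mathbb{N}d\setminus\supp\Xi$ takes values in $\widetilde{\Xi}(\cdot)_n$, so $Y\oplus X_{*m}$ takes values in $\widetilde{\Xi}(\cdot)_{n+m}$, which is strictly larger than $\Xi_{n+m}$ (it contains, e.g., summands equal to $X_*(t)$ that need not lie in any $\Xi_k$). The uniqueness established in part 2 covers only $\Xi_{n+m}$-valued solutions of $\dot X=g_{n+m}(t,X)$, so it does not rule out $Y\oplus X_{*m}$, and the contradiction you claim does not follow. The way to close this is the one implicit in the paper's construction: show that any continuous $\widetilde{\Xi}(\cdot)_n$-valued solution of \eqref{eq:ODE'} is an element of $\widetilde{\Omega}_n$ (i.e., is a direct sum of functions from $\Omega$ and copies of the function $X_*$), hence a fixed point of the extended Picard operator $\widetilde{f}$, whose uniqueness follows from \eqref{eq:rel-contract}; that inequality must be applied to $\widetilde{f}$ on $\widetilde{\Omega}_n$ directly, not combined with padding. (The paper itself declares this remaining verification straightforward and omits it, so you are in good company, but as written your padding step proves nothing.)
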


\section{The proofs}\label{sec:proofs}
\begin{proof}[Proof of Lemma \ref{lem:nc singleton}]
First we prove that there exist $ k \in \mathbb{N}, n_{1}, \ldots,
n_{k} \in \supp{\Omega}$ such that
\[ d = {\rm gcd} \{ n_{1}, \ldots, n_{k}\}.\] Order elements of $\supp{\Omega}$ increasingly. We have
$$n_{1} \geq
  {\rm gcd}\{n_{1}, n_{2}\} \geq {\rm gcd}\{n_{1}, n_{2}, n_{3}\}\geq \ldots\ (\geq d).$$
 There is at most a finite number of strict inequalities in this chain of inequalities. Let $k$ be the
first integer satisfying
$${\rm gcd}\{n_{1}, \ldots, n_{k}\} = {\rm gcd}\{n_{1}, \ldots, n_{k}, n_{k+1}\} = \ldots .$$
Then
\begin{equation*} {\rm gcd}\{n_{1}, \ldots, n_{k}\} = {\rm gcd}\{n_{1}, \ldots, n_{k}, \ldots\} = {\rm gcd}
\{ n: n \in \supp{\Omega}\} = d
\end{equation*}
because every $n \in \supp{\Omega}$ is divisible by ${\rm gcd}\{
n_{1}, \ldots, n_{k}\}.$

Let $s = {\rm lcm}\{n_{1}, \ldots, n_{k}\}.$ Then $s \in
\supp{\Omega}$  and for every $j = 1, \ldots, k$, the matrix
$\bigoplus_{\alpha = 1}^{s/n_j}X_{n_j}$ is in $\Omega_s$. By the
assumption, this matrix must coincide with $X_s$. Let us use the
convention that the rows and columns of a $n\times n$ matrix $M$
over $\mathcal{S}$ are enumerated from $0$ to $n-1$. Define the
diagonal shift  $S$ which acts on such matrices as follows:
$$(SM)_{ij} = M_{(i-1)\bmod n, (j-1)\bmod n}, \quad  i,j = 0, \ldots, n-1.$$
Clearly, the inverse shift is given by
$$ (S^{-1}M)_{ij} = M_{(i+1)\bmod n, (j+1)\bmod n}, \quad  i,j = 0, \ldots, n-1.$$
Since $X_{s}$ is a block diagonal (in particular, block circulant)
matrix, we have
\begin{equation}\label{eq:fixedp-2} S^{\pm n_{j}}X_{s} = X_{s}. \end{equation}
Since there exist $m_{1}, \ldots, m_{k} \in \mathbb{Z}$ such that
$ d = m_{1}n_{1} + \ldots +m_{k}n_{k} $ (see, e.g., \cite[Problem
1.1]{Vinograd}),  we obtain from \eqref{eq:fixedp-2} that
\begin{equation}\label{eq:fixedp-3} S^{d}X_{s} = X_{s}. \end{equation}
Since $X_{s}$ has the form  \eqref{eq:singleton}, all off-diagonal
$d\times d$ block entries equal $\mathbf{O}_{d\times d}$, and it
follows from \eqref{eq:fixedp-3} that all $d \times d$ block
diagonal entries of $X_{s}$ are equal, say to $X \in
\mathcal{S}^{d \times d}.$ Therefore,
$X_s=\bigoplus_{\alpha=1}^{s/d}X$. Comparing this with
$X_s=\bigoplus_{\beta=1}^{s/n_j}X_{n_j}$, we obtain
$X_{n_j}=\bigoplus_{\gamma=1}^{n_j/d}X$, for every $j=1,\ldots,k$.

For every $n\in\supp\Omega$, we have
$$\bigoplus_{\alpha=1}^{n_1}X_n=X_{nn_1}=\bigoplus_{\beta=1}^nX_{n_1}=
\bigoplus_{\beta=1}^n\bigoplus_{\gamma=1}^{n_1/d}X=\bigoplus_{\alpha=1}^{n_1}\bigoplus_{\delta=1}^{n/d}X
\in\Omega_{nn_1}.$$ Therefore, \eqref{eq:singleton} holds.
\end{proof}

\begin{proof}[Proof of Theorem \ref{thm:thfixedp-1}]
1. Observe that $\{X_{*n}\}_{n\in\supp\Omega}$ is a nc set.
Indeed, since $f$ respects direct sums, for any
$n,m\in\supp\Omega$ one has
$$f(X_{*n}\oplus X_{*m})=f(X_{*n})\oplus f(X_{*m})=X_{*n}\oplus
X_{*m}\in\Omega_{n+m}.$$ Since $X_{*(n+m)}$ is the only fixed point of $f$ in $\Omega_{n+m}$, one must have
$X_{*n}\oplus X_{*m}=X_{*(n+m)}$, so that the set $\{X_{*n}\}_{n\in\supp\Omega}$ respects direct sums of
matrices. By Lemma \ref{lem:nc singleton}, there exists $X_*\in \mathcal{S}^{d\times d}$ such that
\eqref{eq:fixedp} holds.

 2. If $d \in \supp{\Omega}$, then there is nothing to prove: we
just set $\tilde{\Omega} = \Omega, \tilde{f} = f.$

Let  $d \notin \supp{\Omega}.$  We define nc extensions of
$\Omega$ and $f$ as follows. Set $ \tilde{\Omega}_{d} =
\{X_{*}\},$
$$\tilde{\Omega}_{kd}:= \Omega_{kd}\cup  \bigcup _{k',k'':k'+k'' = k}(\tilde{\Omega}_{k'd}\oplus \tilde{\Omega}_{k''d}),
\qquad  k =2,3,\ldots, $$ where $ \Omega_{kd} = \emptyset$ if $ kd \notin \supp{\Omega}.$ By the construction,
$\tilde{\Omega}$ is an nc set.
 Notice that $\tilde{\Omega}$ consists of matrices which are obtained as direct sums of matrices from $\Omega$ and
 copies of $X_{*}$ in every possible order, and that such direct sum decompositions are not necessarily unique.
Let
$$  X = \bigoplus_{\alpha = 1}^{m}X_{\alpha}, \quad Y = \bigoplus_{\beta = 1}^{n}Y_{\beta}$$
for some $m, n \in \mathbb{N},$ $X_{\alpha} \in \Omega_{j_{\alpha}}$, $j_{\alpha} \in \supp{\Omega}$, or
$X_{\alpha} = X_{*}$, and  $Y_{\beta} \in \Omega_{k_{\beta}}$, $k_{\beta} \in \supp{\Omega}$, or $Y_{\beta} =
X_{*}.$
 Define
\[\tilde{f}(  X) := \bigoplus_{\alpha = 1}^{m}\tilde{f}(X_{\alpha})\] where $\tilde{f}(  X_{\alpha}) :=
f(X_{\alpha})$ if $ X_{\alpha} \in \Omega_{j_{\alpha}}$, and $\tilde{f}(  X_{\alpha}) := X_{*}$ if $ X_{\alpha} =
X_{*}.$ Define
\[\tilde{f}(  Y) := \bigoplus_{\beta = 1}^{n}\tilde{f}(Y_{\beta})\] where $\tilde{f}(  Y_{\beta}) := f(Y_{\beta})$ if
$ Y_{\beta} \in \Omega_{k_{\beta}}$, and $\tilde{f}(  Y_{\beta}) := X_{*}$ if $ Y_{\beta} = X_{*}.$ We will show
that $\tilde{f}$ is correctly defined and is a nc function. Suppose we have $SY = XS$ for some matrix \[S \in
\mathcal{R}^{(j_{1} + \cdots +j_{m})d \times (k_{1} + \cdots +k_{n})d}\] where we set $j_{\alpha} = 1$ if
$X_{\alpha} =X_{*},$ and $k_{\beta} = 1$ if $Y_{\beta} =X_{*}.$ We may view $S$ as a $m\times n$ block matrix
with blocks $S_{\alpha\beta} \in \mathcal{R}^{j_{\alpha}d\times{k_{\beta}d}}.$ Then
\[ (SY)_{\alpha\beta} =  S_{\alpha\beta} Y_{\beta} =  X_{\alpha} S_{\alpha\beta} = (XS)_{\alpha\beta}.  \]
We have four cases:

Case 1.  $X_{\alpha} \in \Omega_{j_{\alpha}d}$, $Y_{\beta} \in \Omega_{k_{\beta}d}.$ Since $f$ is a nc function,
we have $ S_{\alpha\beta}f( Y_{\beta}) = f( X_{\alpha}) S_{\alpha\beta},$ i.e. $ S_{\alpha\beta}\tilde{f}(
Y_{\beta})
=\tilde{ f}( X_{\alpha}) S_{\alpha\beta}.$\\

Case 2.  $X_{\alpha} = X_{*}$, $Y_{\beta} \in \Omega_{k_{\beta}d}.$ Then  $ S_{\alpha\beta} Y_{\beta} =
 X_{*} S_{\alpha\beta}$ implies

\begin{multline*}
 \left ( \left [ \begin{array}{c}S_{\alpha\beta}Y_{\beta}\\ \vdots \\ S_{\alpha\beta}Y_{\beta}  \end{array} \right ] =
 \right) \left [ \begin{array}{c}S_{\alpha\beta}\\ \vdots \\ S_{\alpha\beta} \end{array} \right ]Y_{\beta} \\
=
\underbrace{
\left [ \begin{array}{ccc}X_{*}\\& \ddots \\&& X_{*} \end{array} \right ]}_{ k \ {\rm times},\  kd \in \supp{\Omega}}
\left [ \begin{array}{c}S_{\alpha\beta}\\ \vdots \\ S_{\alpha\beta} \end{array} \right ]\left
( = \left [ \begin{array}{c}X_{*}S_{\alpha\beta}\\ \vdots \\X_{*} S_{\alpha\beta} \end{array} \right ]
 \right)
\end{multline*}
 for any $k$ such that $kd \in \supp{\Omega}.$ Then since $ \left [ \begin{array}{ccc}X_{*}\\& \ddots \\&& X_{*} \end{array}
 \right] \in \Omega_{kd}$  is a fixed point of $f |_{\Omega_{kd}}$, we have

\begin{multline*}
 \left ( \left [ \begin{array}{c}S_{\alpha\beta}f(Y_{\beta})\\ \vdots \\ S_{\alpha\beta}f(Y_{\beta})  \end{array}
 \right ] =
 \right) \left [ \begin{array}{c}S_{\alpha\beta}\\ \vdots \\ S_{\alpha\beta} \end{array} \right ]f(Y_{\beta})
 \\ =\left [ \begin{array}{ccc}X_{*}\\& \ddots \\&& X_{*} \end{array} \right ] \left [ \begin{array}{c}S_{\alpha\beta}\\
\vdots \\ S_{\alpha\beta} \end{array} \right ]\left ( = \left [ \begin{array}{c}X_{*}S_{\alpha\beta}\\ \vdots
 \\X_{*} S_{\alpha\beta} \end{array} \right ]
 \right)\end{multline*}
and  $ S_{\alpha\beta}f( Y_{\beta}) = X_{*} S_{\alpha\beta},$ i.e.
$ S_{\alpha\beta}\tilde{f}( Y_{\beta}) =\tilde{ f}( X_{\alpha}) S_{\alpha\beta}.$\\

Case 3.  $X_{\alpha} \in\Omega_{j_{\alpha}d}$, $Y_{\beta} = X_{*}.$ Then $ S_{\alpha\beta} X_{*} =  X_{\alpha}
S_{\alpha\beta}$ implies

\begin{multline*}
 \left ( \left [ \begin{array}{c}S_{\alpha\beta}X_{*} \ldots  S_{\alpha\beta}X_{*}  \end{array} \right ] =
 \right) \left [ \begin{array}{c}S_{\alpha\beta} \ldots  S_{\alpha\beta} \end{array} \right ]
\underbrace{
\left [ \begin{array}{ccc}X_{*}\\& \ddots \\&& X_{*} \end{array} \right ]}_{ k \ {\rm times},\  kd \in
\supp{\Omega}} \\
=X_{\alpha}\left [ \begin{array}{c}S_{\alpha\beta}\ldots  S_{\alpha\beta} \end{array} \right ]\left ( = \left [
\begin{array}{c}X_{\alpha}S_{\alpha\beta}\ldots X_{\alpha} S_{\alpha\beta} \end{array} \right ]
 \right)
\end{multline*}for any $k$ such that $kd \in \supp{\Omega}.$ Then since
$ \left [ \begin{array}{ccc}X_{*}\\& \ddots \\&& X_{*} \end{array} \right] \in \Omega_{kd}$  is a fixed point of
 $f |_{\Omega_{kd}}$ we have
\begin{multline*}
 \left ( \left [ \begin{array}{c}S_{\alpha\beta}X_{*} \ldots  S_{\alpha\beta}X_{*}  \end{array} \right ] =
 \right) \left [ \begin{array}{c}S_{\alpha\beta} \ldots  S_{\alpha\beta} \end{array} \right ]
\left [ \begin{array}{ccc}X_{*}\\& \ddots \\&& X_{*} \end{array} \right ] =\\
f(X_{\alpha})\left [ \begin{array}{c}S_{\alpha\beta}\ldots  S_{\alpha\beta} \end{array} \right ]\left ( =
 \left [ \begin{array}{c}f(X_{\alpha})S_{\alpha\beta}\ldots f( X_{\alpha}) S_{\alpha\beta} \end{array} \right ]
 \right).
\end{multline*}
and  $ S_{\alpha\beta}X_{*} = f(X_{\alpha}) S_{\alpha\beta},$ i.e. $ S_{\alpha\beta}\tilde{f}( Y_{\beta}) =
\tilde{ f}( X_{\alpha}) S_{\alpha\beta}.$\\

Case 4.  If $ X_{\alpha} = X_{*}, Y_{\beta} = X_*{}$, then $S_{\alpha\beta}X_{*} = X_{*}S_{\alpha\beta}$ means
$$S_{\alpha\beta}\tilde{f}(Y_{\beta}) = \tilde{f}(X_{\alpha})S_{\alpha\beta}.$$

Since we have in all these cases that
$$ S_{\alpha\beta}\tilde{f}(Y_{\beta}) = \tilde{f}(X_{\alpha})S_{\alpha\beta}, \quad \alpha = 1, \ldots, m, \
 \beta=1, \ldots, n,$$ we obtain that $S\tilde{f}(Y) = \tilde{f}(X)S.$

In the case where \[X=Y,\quad S = \mathbf{I}_{(j_{1}+ \cdots + j_{m})d}= \mathbf{I}_{(k_{1} + \cdots +k_{n})d},
\] we obtain that $\tilde{f}(X)=\tilde{f}(Y)$, i.e., the definition of $\tilde{f}$ is independent of the
decomposition of a matrix from $ \tilde{ \Omega}$ into a direct sum of matrices from $\Omega$ and copies of
$X_{*}.$ Thus, $\tilde{f}$ is  a correctly defined nc function extending $f$ and, clearly,
$\tilde{f}|_{\tilde{\Omega}_{n}}$ has a unique fixed point
\[  X_{* n} = \bigoplus_{\alpha = 1}^{n/d}X_{*}.\]
\end{proof}

\begin{proof}[Proof of Theorem \ref{thm:thfixedp-2}]

1. By the classical Banach Fixed Point Theorem, a.k.a. the
contractive mapping principle (see, e.g., \cite[pp. 216--217]{RF}
or \cite[Chapter 3]{HN}), for every $n \in\supp{\Omega},$ there
exists a unique $X_{*, n} \in \Omega_{n}$ which is a fixed point
of $f|_{\Omega_{n}}$.
 By Theorem  \ref{thm:thfixedp-1}, there exists $X_{*} \in \mathcal{S}^{d \times
d}$ such that
 $X_{*n} = \bigoplus_{\alpha = 1}^{n/d}X_{*}$ for every $n \in \supp{\Omega}.$

2. Define $ \tilde{\Omega}$ and $\tilde{f}$ as in the proof of part 2 of Theorem \ref{thm:thfixedp-1}. Since for
every $n \in \mathbb{N}, \  \tilde{\Omega}_{nd} = \Omega_{nd}\cup  \bigcup _{n'+n'' =
n}(\tilde{\Omega}_{n'd}\oplus \tilde{\Omega}_{n''d})$ we obtain by induction that $\tilde{\Omega}_{nd}$ is closed
in $\mathcal{V}^{nd \times nd}$ (because the direct sum or the union of a finite number of closed sets is
closed), and therefore is a complete metric space with respect to the metric $\tilde{\rho}_{nd}$ induced by the
norm $\| \cdot \|_{nd}$ on the Banach space $\mathcal{V}^{nd \times nd}.$ We note that a subspace of a complete
metric space is closed if and only if it is complete itself. Next, since every $X \in \tilde{\Omega}$ is a direct
sum of matrices from $\Omega$ and copies of $X_{*}$, and $\tilde{f}$ respects direct
 sums, we have \eqref{eq:rel-contract}
with $$\tilde{c}_{n} = \max_{k\in\supp\Omega\colon k\leq n } c_{k}.$$ The estimate \eqref{eq:conv_rate} is
obtained from \eqref{eq:rel-contract} by iteration, so that the convergence of $X^j$ to $X_{*n}$ follows.
\end{proof}
\begin{proof}[Proof of Theorem \ref{thm:nested}]
For every $n\in\supp\Omega$, the sets $\Omega_n^j$, $j=1,\ldots$,
are nested and satisfy the conditions of the classical theorem on
nested closed sets (see, e.g., \cite[Page 195]{RF}), hence there
exists a unique matrix $X_{*n}\in\bigcap_{j=1}^\infty\Omega_n^j$.
Since $\Omega^j$ are nc sets, so is
$$\bigcap_{j=1}^\infty\Omega^j=\bigcap_{j=1}^\infty\coprod_{n\in\supp\Omega}\Omega_n^j=
\coprod_{n\in\supp\Omega}\bigcap_{j=1}^\infty\Omega_n^j=
\{X_{*n}\}_{n\in\supp\Omega}.$$ By Lemma \ref{lem:nc singleton},
there exists a unique $X_*\in\mathcal{S}^{d\times d}$ such that
$X_{*n}=\bigoplus_{\alpha=1}^{n/d}X_{*}$ for every
$n\in\supp\Omega$, and the conclusion of the theorem follows.
\end{proof}
\begin{proof}[Proof of Theorem \ref{thm:ODE}]
1. This part follows by Lemma \ref{lem:nc singleton}.

2. For every fixed $n\in\supp\Xi$, the initial value problem
\eqref{eq:ODE} can be reformulated as an integral equation
\begin{equation}\label{eq:integral}
X(t)=X_{0n}+\int_{t_0}^tg_n(s,X(s))\,ds.
\end{equation}
By the fundamental theorem of calculus, a continuous solution of
\eqref{eq:integral} is a continuously differentiable solution of
\eqref{eq:ODE}. Fix any $\delta<1/C$. For every $n\in\supp\Xi$,
let $\Omega_n$ be the metric space of continuous functions
$X_n\colon [t_0,t_0+\delta]\to\Xi_n$ with $X_n(t_0)=X_{0n}$ and
with the norm-induced metric. Since $\Xi_n$ is a closed subspace
of the Banach space $\mathcal{V}^{n\times n}$, it is Banach
itself. Since $\Omega_n$ is a closed subset in the Banach space
$C([t_0,t_0+\delta],\Xi_n)$, it is a complete metric space. The
space $\mathcal{W}:=C([t_0,t_0+\delta],\mathcal{V})$ is an
operator space, and for every $n=1,\ldots,$ the space
$\mathcal{W}^{n\times n}$ can be naturally identified with
$C([t_0,t_0+\delta],\mathcal{V}^{n\times n})$. Identifying
$\Omega_n$ with a closed set in $\mathcal{W}^{n\times n}\cong
C([t_0,t_0+\delta],\mathcal{V}^{n\times n})$, we conclude that
$\Omega:=\coprod_{n\in\supp\Xi}\Omega_n$ is a nc set in the nc
space $\mathcal{W}_{\rm nc}$. Define a mapping
$f\colon\Omega\to\Omega$ as follows: for $n\in\supp\Xi$ and
$X\in\Omega_n$, set
$$f(X):=X_{0n}+\int_{t_0}^tg_n(s,X(s))\,ds.$$
Observe that $f$ respects direct sums of matrices. We also have
\begin{multline*}
\|f(X)-f(Y)\|_\infty=\sup_{t\in[t_0,t_0+\delta]}\|[f(X)](t)-[f(Y)](t)\|_n\\
=\sup_{t\in[t_0,t_0+\delta]}
\left\|\int_{t_0}^t\left(g_n(s,X(s))-g_n(s,Y(s))\right)\,ds\right\|_n\\
\le\sup_{t\in[t_0,t_0+\delta]}
\int_{t_0}^t\left\|g_n(s,X(s))-g_n(s,Y(s))\right\|_n\,ds\\
\le\sup_{t\in[t_0,t_0+\delta]}
\int_{t_0}^tC\left\|X(s)-Y(s)\right\|_n\,ds\\
\le C\delta\|X-Y\|_\infty.
\end{multline*}
By Theorem \ref{thm:thfixedp-2}, $f|_{\Omega_n}$ has a unique
fixed point $X_{*n}$ for every $n\in\supp\Xi$. This fixed point is
a continuous solution of \eqref{eq:integral}, and thus a
continuously differentiable solution of \eqref{eq:ODE} on
$[t_0,t_0+\delta]$. Moreover,
$X_{*n}=\bigoplus_{\alpha=1}^{n/d}X_*$ for every $n\in\supp\Xi$,
where $X_*$ is a continuously differentiable function on
$[t_0,t_0+\delta]$ with values in $\mathcal{V}^{d\times d}$. The
same argument applies to $t_0-\delta\le t\le t_0$. The argument
holds for any $t_0\in \mathcal{I}$, and by covering $\mathcal{I}$
with overlapping intervals of length $\delta$, we can extend $X_*$
to a continuously differentiable function on $\mathcal{I}$, so
that \eqref{eq:solution} is a unique solution of \eqref{eq:ODE}.

3. If $d\in\supp\Xi$, we just define $\widetilde{\Xi}(t):=\Xi$, $t\in\mathcal{I}$, so that for every
$n\in\mathbb{N}d$ we have a trivial fiber bundle, with the total space $\mathcal{I}\times\Xi_n$. The initial
value problem \eqref{eq:ODE'} is then identified with \eqref{eq:ODE}, and its solution \eqref{eq:solution'} is
identified with \eqref{eq:solution}.

 If $d\notin\supp{\Xi}$, then applying the argument in our proof of part 2 of this theorem for the
line segment $[t_0,t_0+\delta]$, we can apply  part 2 of Theorem
\ref{thm:thfixedp-2} to the contractive mapping
$f\colon\Omega\to\Omega$ (which respects matrix similarities, as
$g(t, \cdot)$ does for every $t\in\mathcal{I}$) and obtain a nc
set $\widetilde{\Omega}\supseteq\Omega$, with
$\supp\widetilde{\Omega}=\mathbb{N}d$, and a nc function
$\widetilde{f}\colon\widetilde{\Omega}\to\widetilde{\Omega}$ which
extends $f$ and has a unique fixed point,
$X_{*n}=\bigoplus_{\alpha=1}^{n/d}X_*$, in each
$\widetilde{\Omega}_n$, $n\in\mathbb{N}d$. Using our construction
of $\widetilde{\Omega}$ as in the proof of Theorem
\ref{thm:thfixedp-1},
 $$\widetilde{\Omega_d}:=\{X_*\},\quad \tilde{\Omega}_{kd}:=
\Omega_{kd}\cup  \bigcup _{k',k'':k'+k'' =
k}(\tilde{\Omega}_{k'd}\oplus \tilde{\Omega}_{k''d}), \qquad  k
=2,3,\ldots, $$ we obtain that for every $n\in\mathbb{N}d$, the
set $\widetilde{\Omega}_n$ consists of continuous
$\mathcal{V}^{n\times n}$-valued functions on $[t_0,t_0+\delta]$
 that are equal to
$X_{0n}=\bigoplus_{\alpha=1}^{n/d}X_0$ at $t_0$; moreover, for any
$t\in [t_0,t_0+\delta]$, the values of these functions at $t$ lie
in the set $\widetilde{\Xi}(t)_n$ defined recursively by
$$\widetilde{\Xi}(t)_d:=\{X_*(t)\},\quad \tilde{\Xi}(t)_{kd}:=
\Xi_{kd}\cup  \bigcup _{k',k'':k'+k'' =
k}(\tilde{\Xi}(t)_{k'd}\oplus \tilde{\Xi}(t)_{k''d}), \qquad  k
=2,3,\ldots. $$ Clearly, $\widetilde{\Xi}(t)_n$ is a complete
metric space with respect to the norm-induced metric, and
 $\widetilde{\Xi}(t)$ is a nc set which contains $\Xi$, for every
 $t\in[t_0,t_0+\delta]$. Covering $\mathcal{I}$ by the intervals
 of length $\delta$, we can extend this construction of
 $\widetilde{\Xi }(t)$ to all $t\in\mathcal{I}$. Next, for every
$t\in\mathcal{I}$, we define
$\widetilde{g}(t,\cdot)\colon\widetilde{\Xi}(t)\to\widetilde{\Xi}(t)$
as follows. Given an arbitrary $X=\bigoplus_{\alpha=1}^m
X_{\alpha}\in\widetilde{\Xi}(t)$, where $X_\alpha\in\Xi$ or
$X_\alpha=X_*(t)$ (as in the proof of part 2 of Theorem
\ref{thm:thfixedp-1}, every element of $\widetilde{\Xi}(t)$ must
be such a direct sum of matrices), we define
$$\widetilde{g}(t,X)=\bigoplus_{\alpha=1}^m\widetilde{g}(t,X_\alpha),$$
where $\widetilde{g}(t,X_\alpha)=g(t,X_\alpha)$ when
$X_\alpha\in\Xi$, or $\widetilde{g}(t,X_\alpha)=X_*(t)$ when
$X_\alpha=X_*(t)$. As in  the proof of part 2 of Theorem
\ref{thm:thfixedp-1}, we can show  that $\widetilde{g}(t,X)$ is
independent of the representation of $X$ as a direct sum of
matrices and that $\widetilde{g}(t,\cdot)$ is a nc function. The
remaining part of the proof is straightforward, and we leave it to
the reader.
\end{proof}

\end{document}